\newtheorem{theorem}{Theorem}
\theoremstyle{plain}
\newtheorem{corollary}{Corollary}
\newtheorem{lemma}{Lemma}
\newtheorem{remark}{Remark}
\numberwithin{equation}{section}
\begin{document}
\title[cadlaguity estimates]{On some c\`{a}dl\`{a}guity moment estimates of
stochastic jump processes }
\author{R. Mikulevi\v{c}ius}
\email{mikulvcs@usc.edu}
\address{Department of Mathematics, University of Southern California, Los
Angeles}
\author{Fanhui Xu}
\email{fanhuixu@usc.edu}
\address{Department of Mathematics, University of Southern California, Los
Angeles}
\date{January 2, 2019}
\subjclass{60G60, 60G17, 46E35}
\keywords{path regularity of stochastic processes, embedding theorems}

\begin{abstract}
Using X. Fernique's results on the compactness of distributions of c\`{a}dl%
\`{a}g random functions, we derive some c\`{a}dl\`{a}guity moment estimates
for stochastic processes with jumps.
\end{abstract}

\maketitle


\section{Introduction}

As suggested by Kolmogorov, it was proved in \cite{ch} (1956) that if $%
X_{t},~0\leq t\leq 1,$ is a separable real valued process (see \cite{dm})
such that%
\begin{equation}
\mathbf{E}\left[ \left\vert X_{t_{1}}-X_{t_{2}}\right\vert ^{p}\wedge
\left\vert X_{t_{2}}-X_{t_{3}}\right\vert ^{p}\right] \leq C\left\vert
t_{1}-t_{3}\right\vert ^{1+r}  \label{f1}
\end{equation}%
with $r>0,p>0,$ and $C$ independent of $t$, then $X$ has no discontinuities
of the second kind with probability 1. If 
\begin{equation}
\mathbf{E}\left[ \left\vert X_{t_{1}}-X_{t_{2}}\right\vert ^{p}\right] \leq
C\left\vert t_{1}-t_{2}\right\vert ^{1+r}  \label{f0}
\end{equation}%
is assumed instead of of (\ref{f1}), then $X$ paths are H\"{o}lder
continuous (Kolmogorov, 1934). It can be shown (e.g. \cite{i}, \cite{s}),
that under (\ref{f0}), the H\"{o}lder continuity is a consequence of the
well-known Sobolev embedding theorem. In that case, $X$ H\"{o}lder norm
moment estimates can be derived. In this note, we estimate the moments of
time supremum and c\`{a}dl\`{a}g H\"{o}lder coefficient of $X$ in terms of
some integrated time differences of $X$ from which, using assumption (\ref%
{f1}), we can derive the classical claim about the existence of a c\`{a}dl%
\`{a}g modification of $X$. On the other hand, the estimate obtained could
be helpful in the construction of the solutions to SPDEs driven by jump
processes when the method of characteristics with a time reversal is used
(see \cite{dpmt}). Some different type moment estimates were derived in \cite%
{pz} by imposing assumptions on the cumulative distribution function of the
quantities introduced in \cite{c} (see \cite{gs}, Section 4 of Chapter III,
as well).

Our note is organized as follows. In Section 2, we introduce some notation
and state the main claim. Some auxiliary results are presented in Section 3,
and the main theorem is proved in Section 4.

\section{Notation and main result}

Let $E$ be a Polish space with distance $d$ and $D\left( \left[ 0,1\right]
,E\right) $ be the standard space of $E$-valued c\`{a}dl\`{a}g functions on $%
\left[ 0,1\right] .$ For $0\leq s\leq t\leq u\leq 1$, denote $\Delta \left(
f;s,t,u\right) =d(f\left( s\right) ,f\left( t\right) )\wedge d(f\left(
t\right) ,f\left( u\right) ).$ For $\sigma <\tau $, let us introduce the
standard modulus of c\`{a}dl\`{a}guity%
\begin{equation*}
\Delta \left( f;(\sigma ,\tau )\right) =\sup_{\sigma \leq s\leq t\leq u\leq
\tau }d(f\left( s\right) ,f\left( t\right) )\wedge d(f\left( t\right)
,f\left( u\right) ).
\end{equation*}%
For $\mu \in \left( 0,1\right) $, we define $\mu $-H\"{o}lder c\`{a}dl\`{a}g
function space $D^{\mu }\left( [0,1],E\right) $ to be the set of of all $%
f\in D(\left[ 0,1\right] ,E)$ such that%
\begin{equation*}
\left[ f\right] _{\mu }+|f]_{\mu }+[f|_{\mu }<\infty ,
\end{equation*}%
where%
\begin{eqnarray*}
\left[ f\right] _{\mu } &=&\sup_{0\leq s\leq t\leq u\leq 1}\frac{%
d(f(s),f\left( t\right) )\wedge d\left( f\left( t\right) ,f\left( u\right)
\right) }{\left\vert u-s\right\vert ^{\mu }}=\sup_{0\leq s\leq t\leq u\leq 1}%
\frac{\Delta \left( f;s,t,u\right) }{\left\vert u-s\right\vert ^{\mu }}, \\
|f]_{\mu } &=&\sup_{t\in (0,1]}\frac{d\left( f\left( 0\right) ,f\left(
t\right) \right) }{t^{\mu }},~[f|_{\mu }=\sup_{t\in \lbrack 0,1)}\frac{%
d\left( f\left( 1\right) ,f\left( t\right) \right) }{\left\vert
1-t\right\vert ^{\mu }}.
\end{eqnarray*}%
For $\mu \in \left( 0,1\right) ,p>1,f\in D\left( \left[ 0,1\right] ,E\right)
,$ let

\begin{eqnarray*}
\left[ \left[ f\right] \right] _{\mu ,p} &=&\left( \int \int \int_{s<t<u}%
\frac{\left\vert \Delta \left( f;s,t,u\right) \right\vert ^{p}}{\left\vert
u-s\right\vert ^{\mu p+3}}dsdtdu\right) ^{1/p}, \\
||f]]_{\mu ,p} &=&\left( \int \frac{d\left( f\left( 0\right) ,f\left(
t\right) \right) ^{p}}{t^{\mu p+1}}dt\right) ^{1/p},~[[f||_{\mu ,p}=\left(
\int \frac{d\left( f\left( 1\right) ,f\left( t\right) \right) ^{p}}{%
\left\vert 1-t\right\vert ^{\mu p+1}}\right) ^{1/p}.
\end{eqnarray*}

Our main result is the following estimate.

\begin{theorem}
\label{t1}Let $p>1,~\mu \in (0,1)$. There is $C>0$ such that for any $f\in
D^{\mu }([0,1],E)$%
\begin{equation*}
\left[ f\right] _{\mu }+|f]_{\mu }+[f|_{\mu }\leq C\left( [\left[ f\right]
]_{\mu ,p}+||f]]_{\mu ,p}+[[f||_{\mu ,p}\right) .
\end{equation*}%
Moreover, if $E=\mathbf{R}^{d},d\left( x,y\right) =\left\vert x-y\right\vert
,x,y\in \mathbf{R}^{d}$, then%
\begin{equation*}
\sup_{0\leq t\leq 1}\left\vert f\left( t\right) \right\vert \leq C\left(
\left\vert f\right\vert _{L_{p}\left( \left[ 0,1\right] \right) }+[\left[ f%
\right] ]_{\mu ,p}+||f]]_{\mu ,p}+[[f||_{\mu ,p}\right) .
\end{equation*}
\end{theorem}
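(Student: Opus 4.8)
The plan is to establish the first inequality by controlling each of $[f]_\mu$, $|f]_\mu$, $[f|_\mu$ by the \emph{full} right-hand side, and then to read off the supremum estimate. For the leading term I would use the identity $[f]_\mu=\sup_{\sigma<\tau}\Delta(f;(\sigma,\tau))/|\tau-\sigma|^\mu$ together with the affine substitution $r\mapsto\sigma+(\tau-\sigma)r$, under which the localized integral over $[\sigma,\tau]$ equals $|\tau-\sigma|^{-\mu p}$ times the same functional for the rescaled function; this reduces matters to estimating $\Delta(g;0,t^{*},1)$ at a single scale for an arbitrary $t^{*}\in(0,1)$. A triple $(s,t,u)$ with $u<1$ has room to the right, so by right-continuity it may be treated inside a slightly larger interval $[s,u']$ and charged to $[[f]]_{\mu,p}$; only triples with $u=1$, where $f(1)$ need not equal $f(1^{-})$, must be charged to the endpoint integral. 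Taking the supremum over $(\sigma,\tau)$, each localized integral being dominated by $[[f]]_{\mu,p}$, would then yield $[f]_\mu\le C([[f]]_{\mu,p}+[[f||_{\mu,p})$.

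The core tool is a Garsia--Rodemich--Rumsey/Fernique-type multiscale chaining adapted to the three-point modulus $\Delta$. Writing $M=\Delta(g;0,t^{*},1)$ we have $d(g(0),g(t^{*}))\wedge d(g(t^{*}),g(1))\ge M$, so it suffices to bound one of the two distances from above. I would bound, say, $d(g(0),g(t^{*}))$ by building a chain $t^{*}=x_0>x_1>\cdots\to 0$ of averaged ``good'' points and telescoping $d(g(0),g(t^{*}))\le\sum_k d(g(x_{k+1}),g(x_k))$. The difficulty is that $\Delta$ only controls a \emph{minimum} of two distances, so to extract a single increment $d(g(x_{k+1}),g(x_k))$ from a value $\Delta(g;x_{k+1},x_k,z_k)$ one must place the auxiliary point $z_k>x_k$ where $g$ is provably far from $g(x_k)$; here the far reference is supplied by $g(t^{*})$, using right-continuity at the $x_k$ and the a priori finiteness of $[g]_\mu$ to run the usual bootstrap. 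At each dyadic scale the averaging produces a point at which the integrand is no larger than the local average of $\Delta^{p}|u-s|^{-\mu p-3}$, and summing the resulting geometric series in the scale yields an absolute constant.

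The two endpoint terms, and the way they interlock with the interior term, form the main obstacle. A single jump of $g$ at an interior time is \emph{invisible} to $\Delta$ (indeed $\Delta(g;(\sigma,\tau))$ small forces a one-jump structure on $(\sigma,\tau)$: the set of $t$ with $d(g(\sigma),g(t))$ small is an initial segment and its complement a final segment), so $d(g(0),g(t))$ taken across such a jump cannot be recovered from $[[g]]_{\mu,p}$ and must instead be read off the basepoint integral $||g]]_{\mu,p}$ through a one-dimensional GRR estimate. Conversely a narrow spike near $0$ has negligible basepoint integral but is seen by the three-point integral, so the bound $|f]_\mu\le C(||f]]_{\mu,p}+[[f]]_{\mu,p})$ genuinely needs the interior term as well; the symmetric statement holds for $[f|_\mu$ with $[[f||_{\mu,p}$, the only truly problematic endpoint being $t=1$, since right-continuity makes every point of $[0,1)$ (including $0$) accessible from the right. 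Making these two mechanisms cooperate—splitting the cost of each increment in the chaining between the jump-type integral and the oscillation-type integral, with matching scales and uniform constants—is where the real work lies.

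Finally, the supremum estimate in the case $E=\mathbf{R}^{d}$ follows from the first inequality with little extra effort. From $t\le 1$ one gets $\sup_{t}|f(t)-f(0)|\le|f]_\mu$, while integrating $|f(0)|^{p}\le 2^{p-1}(|f(t)|^{p}+|f]_\mu^{p}t^{\mu p})$ over $t\in[0,1]$ pins $|f(0)|\le C(|f|_{L_{p}([0,1])}+|f]_\mu)$. Combining these and invoking the already-established bound $|f]_\mu\le C([[f]]_{\mu,p}+||f]]_{\mu,p}+[[f||_{\mu,p})$ gives $\sup_{t}|f(t)|\le C(|f|_{L_{p}([0,1])}+[[f]]_{\mu,p}+||f]]_{\mu,p}+[[f||_{\mu,p})$, as claimed.
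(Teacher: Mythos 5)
Your plan for the supremum estimate and the general shape of the argument (reduce $[f]_{\mu }$ to a single-scale estimate, then absorb a $\delta ^{\mu }\left[ f\right] _{\mu }$ term using the a priori finiteness of $\left[ f\right] _{\mu }$) are reasonable, but the central step --- bounding $\Delta \left( g;0,t^{\ast },1\right) $ by an additive chain --- has a genuine gap, and it is precisely the gap your own third paragraph identifies without resolving. First, after rescaling an interior interval $\left[ \sigma ,\tau \right] $ to $\left[ 0,1\right] $, the basepoint integrals of the rescaled function are centered at $f\left( \sigma \right) $ and $f\left( \tau \right) $; these are not among $\left[ \left[ f\right] \right] _{\mu ,p}$, $||f]]_{\mu ,p}$, $[[f||_{\mu ,p}$ and are not uniformly controlled by them (take $f=1_{[1/2,1]}$ and $\sigma \uparrow 1/2$), so the ``one-dimensional GRR through the basepoint integral'' that you invoke to see a single interior jump is unavailable at interior scales. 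Second, the telescoping $d\left( g\left( 0\right) ,g\left( t^{\ast }\right) \right) \leq \sum_{k}d\left( g\left( x_{k+1}\right) ,g\left( x_{k}\right) \right) $ cannot be closed: if $g$ has one jump in $\left( 0,t^{\ast }\right) $, the increment straddling it is of the order of the jump size at every scale while every nearby $\Delta $-value is zero; and since you only need to bound the \emph{smaller} of $d\left( g\left( 0\right) ,g\left( t^{\ast }\right) \right) $ and $d\left( g\left( t^{\ast }\right) ,g\left( 1\right) \right) $, you must decide which side to chain on, which depends on where the jumps sit --- your sketch gives no mechanism for this. An additive telescope is the wrong recursion for a min-of-two-distances modulus.

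The paper resolves exactly this point with Fernique's inequality (Lemma \ref{kel}): $N\left( f;\left( \sigma ,\tau \right) \right) \leq N\left( f;\left( \sigma ,t\right) \right) \vee N\left( f;\left( t,\tau \right) \right) +\Delta \left( f;\sigma ,t,\tau \right) $, where the maximum (not the sum) over the two halves is what makes the bisection recursion converge and automatically selects the correct side; combined with the equivalence of $N$ and $\Delta $ this yields Lemma \ref{l2}, which reduces everything to midpoint triples $t=\left( s+u\right) /2$. For those, the paper performs a single-scale average over $\varepsilon $-neighborhoods of $s,t,u$ (no multiscale chaining), valid when $s$ and $1-u$ exceed $\frac{1}{4}\left( u-s\right) $, giving $C\left( u-s\right) ^{\mu }\left( \left[ f\right] _{\mu }\delta ^{\mu }+\delta ^{-3/p}\left[ \left[ f\right] \right] _{\mu ,p}\right) $; triples too close to $0$ or $1$ are handled through the two endpoint integrals, so, contrary to your claimed intermediate bound $\left[ f\right] _{\mu }\leq C\left( \left[ \left[ f\right] \right] _{\mu ,p}+[[f||_{\mu ,p}\right) $, the term $||f]]_{\mu ,p}$ does enter the estimate of $\left[ f\right] _{\mu }$. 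To salvage your outline you would need to replace the additive chain by this max-subadditive recursion (or an argument of comparable strength); the final absorption of $C_{1}\delta ^{\mu }\left[ f\right] _{\mu }$ and your derivation of the supremum bound from $|f]_{\mu }$ and $\left\vert f\right\vert _{L_{p}}$ are fine and agree with the paper.
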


\begin{remark}
\label{re3}Obviously, for any $E$-valued measurable function $f$ on $\left[
0,1\right] ,$%
\begin{eqnarray*}
&&\left[ \left[ f\right] \right] _{\mu ,p}^{p} \\
&\leq &\int \int \int_{s<t<u}\frac{d(f(s),f\left( t\right) )^{\frac{p}{2}%
}d\left( f\left( t\right) ,f\left( u\right) \right) ^{\frac{p}{2}}}{%
\left\vert u-s\right\vert ^{\mu p+3}}dsdtdu.
\end{eqnarray*}
\end{remark}

\begin{corollary}
\label{c1}Let $\left( \Omega ,\mathcal{F},\mathbf{P}\right) $ be a
probability space and $X:\left[ 0,1\right] \times \Omega \rightarrow E$ be a
measurable function, $p>1,r>0$. Assume that%
\begin{eqnarray*}
\mathbf{E}\left[ \Delta \left( X;s,t,u\right) ^{p}\right] &\leq
&C_{0}\left\vert u-s\right\vert ^{1+r},0\leq s<t<u\leq 1, \\
\mathbf{E}\left[ d\left( X\left( 1\right) ,X\left( t\right) \right) ^{p}%
\right] &\leq &C_{0}\left( 1-t\right) ^{r},0\leq t\leq 1, \\
\mathbf{E}\left[ d\left( X\left( 0\right) ,X\left( t\right) \right) ^{p}%
\right] &\leq &C_{0}t^{r},0\leq t\leq 1,
\end{eqnarray*}%
for some $C_{0}>0$. Then for each $\mu \in \left( 0,1\right) ,\mu <r/p,$
there is a constant $N=N\left( \mu ,r,p\right) $ so that 
\begin{equation}
\mathbf{E}\left( \left[ \left[ X\right] \right] _{\mu ,p}^{p}+||X]]_{\mu
,p}^{p}+[[X||_{\mu ,p}^{p}\right) \leq NC_{0}.  \label{fa1}
\end{equation}%
If $E=\mathbf{R}^{k},d\left( x,y\right) =\left\vert x-y\right\vert ,x,y\in 
\mathbf{R}^{k}$, and \ $X_{\cdot }\in D^{\mu }\left( \left[ 0.1\right]
,E\right) $ a.s. with $\mu \in \left( 0,1\right) ,\mu <r/p,$ then, in
addition,%
\begin{equation*}
\mathbf{E}\left[ \sup_{0\leq t\leq 1}\left\vert X_{t}\right\vert ^{p}\right]
\leq N[C_{0}+\mathbf{E}\int_{0}^{1}\left\vert X_{t}\right\vert ^{p}dt].
\end{equation*}
\end{corollary}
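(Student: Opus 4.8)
The plan is to deduce the corollary from Theorem \ref{t1} by a routine application of Tonelli's theorem. For the first estimate I would write out the three integral seminorms and interchange the expectation with the deterministic Lebesgue integrals. Since the integrands are nonnegative and $X$ is jointly measurable in $(t,\omega )$, Tonelli applies and gives
\begin{equation*}
\mathbf{E}\left[ \left[ X\right] \right] _{\mu ,p}^{p}=\int \int \int_{s<t<u}\frac{\mathbf{E}\left[ \Delta \left( X;s,t,u\right) ^{p}\right] }{\left\vert u-s\right\vert ^{\mu p+3}}\,ds\,dt\,du,
\end{equation*}
together with the analogous identities for $\mathbf{E}||X]]_{\mu ,p}^{p}$ and $\mathbf{E}[[X||_{\mu ,p}^{p}$.

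Next I would insert the three moment hypotheses. For the interior term the integrand is bounded by $C_{0}\left\vert u-s\right\vert ^{1+r}/\left\vert u-s\right\vert ^{\mu p+3}=C_{0}\left\vert u-s\right\vert ^{r-\mu p-2}$; carrying out the $t$-integration over $(s,u)$ contributes one extra factor $\left\vert u-s\right\vert $, so that
\begin{equation*}
\mathbf{E}\left[ \left[ X\right] \right] _{\mu ,p}^{p}\leq C_{0}\int \int_{s<u}\left\vert u-s\right\vert ^{r-\mu p-1}\,ds\,du,
\end{equation*}
which is a finite multiple of $C_{0}$ exactly because $\mu <r/p$ forces $r-\mu p-1>-1$. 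The two boundary terms reduce in the same way to $C_{0}\int_{0}^{1}t^{r-\mu p-1}\,dt$ and its mirror image at $t=1$, again finite under the same condition. Summing the three elementary integrals produces a constant $N=N(\mu ,r,p)$ and the bound (\ref{fa1}).

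For the supremum estimate I would apply the second inequality of Theorem \ref{t1} pathwise, which is legitimate for a.e. $\omega $ since $X_{\cdot }\in D^{\mu }([0,1],E)$ almost surely. Raising that inequality to the $p$-th power, using the elementary convexity bound $(a+b+c+d)^{p}\leq 4^{p-1}(a^{p}+b^{p}+c^{p}+d^{p})$ valid for $p>1$, and taking expectations yields
\begin{equation*}
\mathbf{E}\left[ \sup_{0\leq t\leq 1}\left\vert X_{t}\right\vert ^{p}\right] \leq C^{p}4^{p-1}\left( \mathbf{E}\int_{0}^{1}\left\vert X_{t}\right\vert ^{p}\,dt+\mathbf{E}\left[ \left[ X\right] \right] _{\mu ,p}^{p}+\mathbf{E}||X]]_{\mu ,p}^{p}+\mathbf{E}[[X||_{\mu ,p}^{p}\right) .
\end{equation*}
Here I used $\mathbf{E}\left\vert X\right\vert _{L_{p}([0,1])}^{p}=\mathbf{E}\int_{0}^{1}\left\vert X_{t}\right\vert ^{p}\,dt$; the last three expectations are controlled by $NC_{0}$ from the first part, so the claimed estimate follows after relabeling the constant.

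I do not expect a genuine obstacle: the corollary is essentially the probabilistic repackaging of Theorem \ref{t1}. The only point that demands care is bookkeeping of the exponents in the interior triple integral---in particular noticing that the $t$-integration supplies exactly the one extra power of $\left\vert u-s\right\vert $ that makes the singularity integrable precisely when $\mu <r/p$, which is the reason that hypothesis is imposed.
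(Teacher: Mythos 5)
Your proposal is correct and follows essentially the same route as the paper: Tonelli plus the moment hypotheses for (\ref{fa1}) (the $t$-integration yielding the extra power of $\left\vert u-s\right\vert $ and the condition $r-\mu p>0$ exactly as in the paper's displayed computation), and a pathwise application of Theorem \ref{t1} combined with (\ref{fa1}) for the supremum bound. No discrepancies to report.
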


\begin{proof}
Let $r-\mu p>0$. Then%
\begin{equation*}
\mathbf{E}\left( \left[ \left[ X\right] \right] _{\mu ,p}^{p}\right) \leq
C_{0}\int \int_{s<u}\frac{\left\vert u-s\right\vert ^{2+r}}{\left\vert
u-s\right\vert ^{\mu p+3}}dsdu\leq NC_{0}\int_{0}^{1}u^{r-\mu p}du.
\end{equation*}%
Similarly, the other terms can be estimated.

If $E=\mathbf{R}^{k},d\left( x,y\right) =\left\vert x-y\right\vert ,x,y\in 
\mathbf{R}^{k}$, and $X_{\cdot }\in D^{\mu }\left( \left[ 0.1\right]
,E\right) $ a.s. with $\mu \in \left( 0,1\right) ,\mu <r/p,$ then the last
estimate obviously follows by Theorem \ref{t1} and (\ref{fa1}).
\end{proof}

\begin{corollary}
\label{c2}Let $X_{t},t\in \left[ 0,1\right] ,$ be a real valued and
stochastically continuous process. Assume that 
\begin{eqnarray*}
\mathbf{E}\left[ \left\vert X_{t}-X_{s}\right\vert ^{p}\wedge \left\vert
X_{t}-X_{u}\right\vert ^{p}\right] &\leq &C\left\vert u-s\right\vert ^{1+r},
\\
\mathbf{E}\left[ \left\vert X_{0}-X_{t}\right\vert ^{p}\right] &\leq &Ct^{r},
\\
\mathbf{E}\left[ \left\vert X_{1}-X_{t}\right\vert ^{p}\right] &\leq
&C\left\vert 1-t\right\vert ^{r}
\end{eqnarray*}%
for all $0\leq s<t<u\leq 1,$ and some $r>0,p>1$. Then $X$ has a c\`{a}dl\`{a}%
g modification.
\end{corollary}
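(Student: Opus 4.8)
The plan is to deduce Corollary \ref{c2} from Corollary \ref{c1} by finding a suitable version of $X$ that lives in $D^{\mu}([0,1],\mathbf{R})$ almost surely, so that the finiteness of the integrated quantities on the left of (\ref{fa1}) forces the H\"{o}lder seminorms to be finite, which in turn yields c\`{a}dl\`{a}g regularity.

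First I would observe that the three hypotheses here are precisely the hypotheses of Corollary \ref{c1} with $E=\mathbf{R}$, $d(x,y)=|x-y|$, and constant $C_{0}=C$; indeed $\Delta(X;s,t,u)=|X_{s}-X_{t}|\wedge|X_{t}-X_{u}|$ in the real-valued case. Hence (\ref{fa1}) applies and gives
\begin{equation*}
\mathbf{E}\left( [[X]]_{\mu,p}^{p}+||X]]_{\mu,p}^{p}+[[X||_{\mu,p}^{p}\right)\leq NC_{0}<\infty
\end{equation*}
for every $\mu\in(0,1)$ with $\mu<r/p$. In particular each of these three random quantities is finite almost surely, so that off a single $\mathbf{P}$-null set all of $[[X]]_{\mu,p}$, $||X]]_{\mu,p}$, $[[X||_{\mu,p}$ are finite. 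The subtlety is that these integrated quantities are computed from an arbitrary measurable modification of $X$ and say nothing by themselves about a fixed set of sample paths being c\`{a}dl\`{a}g; the left-hand side of the estimate in Theorem \ref{t1} is defined only for genuinely c\`{a}dl\`{a}g $f$. So the real work is to produce a version of $X$ whose paths lie in $D^{\mu}([0,1],\mathbf{R})$ a.s.

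The key step, and the main obstacle, is to select a good countable dense set $Q\subset[0,1]$ (say the dyadic rationals together with $0$ and $1$) and to show that, on the full-measure event where the three integrated seminorms are finite, the restriction $X|_{Q}$ is uniformly c\`{a}dl\`{a}g on $Q$ in the sense that its modulus $\Delta(X;(\sigma,\tau))$ over rational points is controlled. The mechanism for this is exactly the inequality of Theorem \ref{t1}: applied to (finite-dimensional restrictions, or to the closure of) $X|_{Q}$, the finiteness of $[[X]]_{\mu,p}+||X]]_{\mu,p}+[[X||_{\mu,p}$ bounds $[X]_{\mu}+|X]_{\mu}+[X|_{\mu}$, which gives a genuine H\"{o}lder-type modulus of c\`{a}dl\`{a}guity along $Q$. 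Concretely I would argue that for rational $s<t<u$ the quantity $|X_{s}-X_{t}|\wedge|X_{t}-X_{u}|$ is dominated by a constant times $|u-s|^{\mu}$ times the finite random constant, and similarly for the boundary terms at $0$ and $1$; this is what prevents discontinuities of the second kind.

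Finally I would invoke the stochastic continuity of $X$ to transfer the pathwise regularity from $Q$ to all of $[0,1]$. Stochastic continuity guarantees that for each fixed $t$ the left and right limits $\lim_{Q\ni s\uparrow t}X_{s}$ and $\lim_{Q\ni s\downarrow t}X_{s}$ coincide in probability with the appropriate one-sided limits, so the process $\widetilde{X}_{t}:=\lim_{Q\ni s\downarrow t}X_{s}$ (with $\widetilde{X}_{1}:=\lim_{Q\ni s\uparrow 1}X_{s}$) is well defined on the full-measure event, has c\`{a}dl\`{a}g paths by construction, and satisfies $\widetilde{X}_{t}=X_{t}$ in probability for each $t$, whence $\mathbf{P}(\widetilde{X}_{t}=X_{t})=1$ for every $t$ by stochastic continuity. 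Thus $\widetilde{X}$ is the desired c\`{a}dl\`{a}g modification. The delicate point throughout is ensuring that the bound along the countable set $Q$ is strong enough to guarantee existence of one-sided limits everywhere (no oscillatory second-kind discontinuities), which is precisely the content encoded in the finiteness of $[X]_{\mu}$ via Theorem \ref{t1}.
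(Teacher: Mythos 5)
Your overall strategy --- get a.s.\ finiteness of the integrated seminorms from Corollary \ref{c1}, upgrade to a pathwise H\"{o}lder-type modulus along a countable dense set, then use stochastic continuity to build the modification --- has a genuine gap at its central step, and it is exactly the step you flag as ``the real work.'' Theorem \ref{t1} is stated (and proved) only for $f\in D^{\mu }([0,1],E)$, i.e.\ for functions already known to be c\`{a}dl\`{a}g with finite $\mu $-H\"{o}lder c\`{a}dl\`{a}g seminorms. Its proof is an absorption argument: one first derives $[f]_{\mu }\leq C_{1}([f]_{\mu }\delta ^{\mu }+\delta ^{-3/p}[[f]]_{\mu ,p}+\cdots )$ and then chooses $\delta $ with $C_{1}\delta ^{\mu }\leq 1/2$ to move $[f]_{\mu }$ to the left-hand side, which is legitimate only if $[f]_{\mu }<\infty $ a priori. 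Applying the inequality to $X$ itself, to $X|_{Q}$, or to ``the closure of $X|_{Q}$'' is therefore circular: the qualitative finiteness of $[X]_{\mu }$ (equivalently, the absence of second-kind discontinuities) is precisely what you are trying to prove, and the finiteness of $[[X]]_{\mu ,p}$, $||X]]_{\mu ,p}$, $[[X||_{\mu ,p}$ --- Lebesgue integrals over $[0,1]^{3}$ and $[0,1]$, insensitive to the values of $X$ on the null set $Q^{3}$ --- does not by itself dominate the supremum of $\Delta (X;s,t,u)/|u-s|^{\mu }$ over $s<t<u$ in $Q$.

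The paper closes this gap by discretization: it sets $X_{t}^{n}=X_{\pi _{n}(t)}$ with $\pi _{n}$ the dyadic projection, so that each $X^{n}$ is a step function and hence trivially lies in $D^{\beta }([0,1])$ for every $\beta $; the moment hypotheses then give $\sup_{n}\mathbf{E}\bigl( [[X^{n}]]_{\mu ,p}^{p}+||X^{n}]]_{\mu ,p}^{p}+[[X^{n}||_{\mu ,p}^{p}\bigr) <\infty $, which yields weak relative compactness of the laws $X^{n}(\mathbf{P})$ on $D([0,1])$; a weak limit has c\`{a}dl\`{a}g paths and, by stochastic continuity, the same finite-dimensional distributions as $X$, so Lemma 2.24 of Kallenberg produces the modification. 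If you want to keep your more pathwise flavour, the same discretization rescues it: apply Theorem \ref{t1} legitimately to each step function $X^{n}$, note that for dyadic $s<t<u$ of level $m$ one has $\Delta (X;s,t,u)/|u-s|^{\mu }\leq [X^{n}]_{\mu }$ for all $n\geq m$, and use Fatou to bound $\mathbf{E}\bigl[ \liminf_{n}[X^{n}]_{\mu }^{p}\bigr] $; only then is your limit construction $\widetilde{X}_{t}=\lim_{Q\ni s\downarrow t}X_{s}$ justified. Your final step, identifying $\widetilde{X}$ as a modification via stochastic continuity, is fine.
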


\begin{proof}
Let 
\begin{equation*}
X_{t}^{n}=X_{\pi _{n}\left( t\right) },\quad t\in \left[ 0,1\right] ,
\end{equation*}%
where $\pi _{n}\left( t\right) =k/2^{n}$ if $k/2^{n}\leq s<(k+1)/2^{n}$, $%
n=1,2,\ldots ,k=0,\ldots ,2^{n}-1$, and $\pi _{n}\left( 1\right) =1-1/2^{n}$%
. Obviously the sequence $X_{\cdot }^{n}\in D^{\beta }\left( \left[ 0,1%
\right] \right) $ a.s. for any $\beta \in \left( 0,1\right) $. Let $\mu \in
\left( 0,1\right) ,\mu p<r$. It is enough to show that 
\begin{equation}
\sup_{n}\mathbf{E}\left[ \left[ \left[ X^{n}\right] \right] _{\mu
,p}^{p}+||X^{n}]]_{\mu ,p}^{p}+[[X^{n}||_{\mu ,p}^{p}\right] <\infty .
\label{f6}
\end{equation}%
Indeed, every $X_{n}$ induces a probability measure $X^{n}\left( \mathbf{P}%
\right) $ on $D\left( \left[ 0,1\right] \right) $. The estimate (\ref{f6})
implies that the sequence of measures $\left\{ X^{n}\left( \mathbf{P}\right)
,n\geq 1\right\} $ is weakly relatively compact (see \cite{gs}, \cite{k}).
Any weak limit of a weakly converging subsequence $X^{n_{k}}$ has c\`{a}dl%
\`{a}g paths with probability 1 and, obviously, the same finite-dimensional
distributions as $X$. Therefore $X$ has a c\`{a}dl\`{a}g modification
according to Lemma 2.24 in \cite{k}.

In order to show (\ref{f6}), we estimate, using assumptions imposed,%
\begin{eqnarray*}
&&\mathbf{E}\left[ \int_{0}^{1}\frac{\left\vert
X_{0}^{n}-X_{t}^{n}\right\vert ^{p}}{t^{\mu p+1}}dt\right] \\
&\leq &\sum_{k=1}^{2^{n}-1}\int_{k/2^{n}}^{\left( k+1\right) /2^{n}}\frac{%
\mathbf{E}\left[ \left\vert X_{0}-X_{k/2^{n}}\right\vert ^{p}\right] }{%
\left( k/2^{n}\right) ^{\mu p+1}}dt\leq C2^{-n}\sum_{k=1}^{2^{n}-1}\left( 
\frac{k}{2^{n}}\right) ^{r-\mu p-1} \\
&=&C\left( 2^{-n}\right) ^{r-\mu p}\sum_{k=1}^{2^{n}-1}k^{r-\mu p-1}\leq
C,n\geq 1.
\end{eqnarray*}%
Similarly,%
\begin{equation*}
\mathbf{E}\left[ \int \frac{\left\vert X_{1}^{n}-X_{t}^{n}\right\vert ^{p}}{%
\left\vert 1-t\right\vert ^{\mu p+1}}dt\right] \leq C,n\geq 1.
\end{equation*}

In the same vein, 
\begin{eqnarray*}
&&\mathbf{E}\int \int \int_{s<t<u}\frac{\left\vert
X_{t}^{n}-X_{s}^{n}\right\vert ^{p}\wedge \left\vert
X_{t}^{n}-X_{u}^{n}\right\vert ^{p}}{\left\vert u-s\right\vert ^{\mu p+3}}%
dsdtdu \\
&\leq
&C\sum_{i=0}^{2^{n}-3}\sum_{j=i+1}^{2^{n}-2}\sum_{k=j+1}^{2^{n}-1}\int_{%
\frac{k}{2^{n}}}^{\frac{k+1}{2^{n}}}\int_{\frac{j}{2^{n}}}^{\frac{j+1}{2^{n}}%
}\int_{\frac{i}{2^{n}}}^{\frac{i+1}{2^{n}}}\frac{\left( \frac{k}{2^{n}}-%
\frac{i}{2^{n}}\right) ^{1+r}}{\left( u-s\right) ^{\mu p+3}}dsdtdu.
\end{eqnarray*}%
Note that $r>\mu p$ and for every set of $\{n,i,j,k\}$, $1/2^{n}\leq \left(
k-i-1\right) /2^{n}\leq u-s\leq \left( k-i+1\right) /2^{n}$. Hence, $\left(
k-i\right) /2^{n}\leq 2\left( u-s\right) $, and 
\begin{eqnarray*}
&&\sum_{i=0}^{2^{n}-3}\sum_{j=i+1}^{2^{n}-2}\sum_{k=j+1}^{2^{n}-1}\int_{%
\frac{k}{2^{n}}}^{\frac{k+1}{2^{n}}}\int_{\frac{j}{2^{n}}}^{\frac{j+1}{2^{n}}%
}\int_{\frac{i}{2^{n}}}^{\frac{i+1}{2^{n}}}\frac{\left( \frac{k}{2^{n}}-%
\frac{i}{2^{n}}\right) ^{1+r}}{\left( u-s\right) ^{\mu p+3}}dsdtdu \\
&\leq &C\int_{0}^{1}\int_{0}^{u}\frac{1}{\left( u-s\right) ^{1+\mu p-r}}%
dsdu\leq C,
\end{eqnarray*}%
which shows that 
\begin{equation*}
\mathbf{E}\int \int \int_{s<t<u}\frac{\left\vert
X_{t}^{n}-X_{s}^{n}\right\vert ^{p}\wedge \left\vert
X_{t}^{n}-X_{u}^{n}\right\vert ^{p}}{\left\vert u-s\right\vert ^{\mu p+3}}%
dsdtdu\leq C,n\geq 1.
\end{equation*}%
Thus (\ref{f6}) follows, and the statement is proved.
\end{proof}

\section{Auxiliary results}

Following \cite{f}, for $0\leq \sigma <\tau \leq 1$, we introduce another
modulus of c\`{a}dl\`{a}guity%
\begin{equation*}
N\left( f;\left( \sigma ,\tau \right) \right) =\inf_{\sigma <\theta \leq
\tau }\sup_{s\in \lbrack \sigma ,\theta ),u\in \left[ \theta ,\tau \right]
}[d\left( f(\sigma ),f(s)\right) \vee d\left( f(u),f\left( \tau \right)
\right) ].
\end{equation*}

Denote for $\eta >0,$%
\begin{equation*}
N\left( f;\eta \right) =\sup_{\sigma <\tau \leq \sigma +\eta }N\left(
f;\left( \sigma ,\tau \right) \right) =\sup_{0<\tau -\sigma \leq \eta
}N\left( f;\left( \sigma ,\tau \right) \right) .
\end{equation*}%
Clearly, $N\left( f;\eta \right) $ is increasing in $\eta $.

\begin{remark}
\label{r1}According to Lemma 1.0 in \cite{f},

(a) For any $\sigma <\tau ,$%
\begin{equation*}
\frac{1}{2}N\left( f;\left( \sigma ,\tau \right) \right) \leq \Delta \left(
f;\left( \sigma ,\tau \right) \right) \leq 2N\left( f;\left( \sigma ,\tau
\right) \right) .
\end{equation*}%
In particular,%
\begin{equation}
\Delta \left( f;\left( 0,1\right) \right) =\sup_{0\leq s\leq t\leq u\leq
1}d(f\left( s\right) ,f\left( t\right) )\wedge d(f\left( t\right) ,f\left(
u\right) )\leq 2N\left( f;\left( 0,1\right) \right) .  \notag
\end{equation}

(b) For $\left( s,t\right) \subseteq \left( \sigma ,\tau \right) ,$%
\begin{equation*}
N\left( f;\left( s,t\right) \right) \leq 2N\left( f;\left( \sigma ,\tau
\right) \right) ,\Delta \left( f;(s,t)\right) \leq \Delta \left( f;(\sigma
,\tau )\right) .
\end{equation*}
\end{remark}

For $\mu \in \left( 0,1\right) $, define 
\begin{equation*}
\left[ f\right] _{\symbol{126}\mu }:=\sup_{\eta >0}\frac{N\left( f;\eta
\right) }{\eta ^{\mu }},f\in D^{\mu }\left( \left[ 0,1\right] \right) .
\end{equation*}

\begin{remark}
\label{cl1}Obviously,%
\begin{eqnarray*}
\left[ f\right] _{\mu } &=&\sup_{0\leq s\leq t\leq u\leq 1}\frac{\Delta
\left( f;s,t,u\right) }{\left\vert u-s\right\vert ^{\mu }}=\sup_{a>0}\sup 
_{\substack{ 0\leq s\leq t\leq u\leq 1,  \\ \left\vert u-s\right\vert \leq a 
}}\frac{\Delta \left( f;s,t,u\right) }{\left\vert u-s\right\vert ^{\mu }} \\
&=&\sup_{a>0}\frac{\sup_{\left\vert u-s\right\vert \leq a,s\leq t\leq
u}\Delta \left( f;s,t,u\right) }{a^{\mu }},
\end{eqnarray*}%
and%
\begin{equation}
\left[ f\right] _{\mu }\leq \sup_{s\leq t\leq u,\left\vert u-s\right\vert
\leq 1/2}\frac{\Delta \left( f;s,t,u\right) }{\left\vert u-s\right\vert
^{\mu }}+2^{\mu }\Delta \left( f;\left( 0,1\right) \right) ;  \label{10}
\end{equation}%
also,%
\begin{equation*}
\left[ f\right] _{\symbol{126}\mu }=\sup_{\eta >0}\frac{N\left( f;\eta
\right) }{\eta ^{\mu }}=\sup_{a>0}\sup_{\eta \leq a}\frac{N\left( f;\eta
\right) }{\eta ^{\mu }}=\sup_{a>0}\frac{\sup_{\eta \leq a}N\left( f;\eta
\right) }{a^{\mu }}.
\end{equation*}
\end{remark}

We show that $\left[ f\right] _{\mu }$ and $\left[ f\right] _{\symbol{126}%
\mu }$ are equivalent.

\begin{lemma}
\label{l1}Let $\mu \in (0,1)$. For any $f\in D^{\mu }\left( [0,1],E\right) $,%
\begin{equation*}
\frac{1}{2}\left[ f\right] _{\symbol{126}\mu }\leq \left[ f\right] _{\mu
}\leq 2\left[ f\right] _{\symbol{126}\mu }.
\end{equation*}
\end{lemma}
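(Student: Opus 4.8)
The plan is to prove the two inequalities simultaneously by comparing $\Delta $ and $N$ \emph{scale by scale}, i.e.\ at each fixed window length $a>0$, and only afterwards dividing by $a^{\mu }$ and taking a supremum. The starting point is the pair of reformulations recorded in Remark \ref{cl1}, which express both seminorms as suprema over a single scale parameter $a$:
\begin{equation*}
\left[ f\right] _{\mu }=\sup_{a>0}\frac{\sup_{\left\vert u-s\right\vert \leq a,\,s\leq t\leq u}\Delta \left( f;s,t,u\right) }{a^{\mu }},\qquad \left[ f\right] _{\symbol{126}\mu }=\sup_{a>0}\frac{\sup_{\eta \leq a}N\left( f;\eta \right) }{a^{\mu }}.
\end{equation*}

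First I would rewrite the two inner suprema in a common form. By the very definition of $\Delta \left( f;(\sigma ,\tau )\right) $ as the supremum of $\Delta \left( f;s,t,u\right) $ over $\sigma \leq s\leq t\leq u\leq \tau $, one has
\begin{equation*}
\sup_{\left\vert u-s\right\vert \leq a,\,s\leq t\leq u}\Delta \left( f;s,t,u\right) =\sup_{0<\tau -\sigma \leq a}\Delta \left( f;\left( \sigma ,\tau \right) \right) =:\Delta \left( f;a\right) ,
\end{equation*}
since every admissible triple $s\leq t\leq u$ with $u-s\leq a$ lies inside the window $(\sigma ,\tau )=(s,u)$ of length $\leq a$, and conversely each triple inside such a window has $u-s\leq \tau -\sigma \leq a$. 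On the $N$-side, the monotonicity of $\eta \mapsto N\left( f;\eta \right) $ noted just after its definition gives $\sup_{\eta \leq a}N\left( f;\eta \right) =N\left( f;a\right) $, where $N\left( f;a\right) =\sup_{0<\tau -\sigma \leq a}N\left( f;\left( \sigma ,\tau \right) \right) $.

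The heart of the argument is then a pointwise application of Remark \ref{r1}(a): for every window $(\sigma ,\tau )$ with $0<\tau -\sigma \leq a$,
\begin{equation*}
\tfrac{1}{2}N\left( f;\left( \sigma ,\tau \right) \right) \leq \Delta \left( f;\left( \sigma ,\tau \right) \right) \leq 2N\left( f;\left( \sigma ,\tau \right) \right) .
\end{equation*}
From the right-hand inequality, bounding $N\left( f;\left( \sigma ,\tau \right) \right) \leq N\left( f;a\right) $ and taking the supremum over all such windows yields $\Delta \left( f;a\right) \leq 2N\left( f;a\right) $; from the left-hand inequality, bounding $\Delta \left( f;\left( \sigma ,\tau \right) \right) \leq \Delta \left( f;a\right) $ and taking the supremum yields $N\left( f;a\right) \leq 2\Delta \left( f;a\right) $. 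Hence $\tfrac{1}{2}N\left( f;a\right) \leq \Delta \left( f;a\right) \leq 2N\left( f;a\right) $ for every $a>0$. Dividing by $a^{\mu }$ and taking $\sup_{a>0}$ turns these into $\tfrac{1}{2}\left[ f\right] _{\symbol{126}\mu }\leq \left[ f\right] _{\mu }\leq 2\left[ f\right] _{\symbol{126}\mu }$, as claimed.

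I do not expect a genuine obstacle here: once Fernique's two-sided comparison of $\Delta $ and $N$ on a fixed interval (Remark \ref{r1}(a)) is granted, the only real content is the bookkeeping that lets the single-window estimate survive the passage to suprema. The one point requiring care is that the two seminorms must be compared at the \emph{same} scale $a$ before taking the outer supremum -- performing the comparison window-by-window and only then optimizing in $a$ -- which is precisely what the reformulations in Remark \ref{cl1} are designed to permit.
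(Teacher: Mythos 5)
Your proposal is correct and follows essentially the same route as the paper: both arguments reduce to the identity $\sup_{|u-s|\le a}\Delta(f;s,t,u)=\sup_{0<\tau-\sigma\le a}\Delta\left(f;(\sigma,\tau)\right)$ (the paper's equation (\ref{fa2})), then apply the two-sided comparison of Remark \ref{r1}(a) window by window at fixed scale $a$ before dividing by $a^{\mu}$ and taking the outer supremum. No substantive difference.
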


\begin{proof}
Since for each $\sigma \leq r\leq \tau ,\tau \leq \sigma +\eta ,$%
\begin{eqnarray*}
\frac{d(f(\sigma ),f\left( r\right) )\wedge d\left( f\left( r\right)
,f\left( \tau \right) \right) }{\eta ^{\mu }} &\leq &\frac{\Delta \left(
f;\left( \sigma ,\tau \right) \right) }{\eta ^{\mu }}\leq \frac{\Delta
\left( f;\left( \sigma ,\tau \right) \right) }{\left( \tau -\sigma \right)
^{\mu }}=\sup_{\sigma \leq s\leq t\leq u\leq \tau }\frac{\Delta \left(
f;s,t,u\right) }{\left\vert \tau -\sigma \right\vert ^{\mu }} \\
&\leq &\sup_{\sigma \leq s\leq t\leq u\leq \tau }\frac{\Delta \left(
f;s,t,u\right) }{\left\vert u-s\right\vert ^{\mu }}\leq \left[ f\right]
_{\mu }
\end{eqnarray*}%
it follows that%
\begin{equation*}
\sup_{\sigma \leq r\leq \tau ,\tau \leq \sigma +\eta }\frac{d(f(\sigma
),f\left( r\right) )\wedge d\left( f\left( r\right) ,f\left( \tau \right)
\right) }{\eta ^{\mu }}\leq \sup_{\sigma \leq \tau ,\tau \leq \sigma +\eta }%
\frac{\Delta \left( f;\left( \sigma ,\tau \right) \right) }{\eta ^{\mu }}%
\leq \left[ f\right] _{\mu },
\end{equation*}%
and, using Remark \ref{cl1},%
\begin{equation}
\left[ f\right] _{\mu }=\sup_{\eta >0}\sup_{\sigma \leq \tau ,\tau \leq
\sigma +\eta }\frac{\Delta \left( f;\left( \sigma ,\tau \right) \right) }{%
\eta ^{\mu }}  \label{fa2}
\end{equation}%
Hence for any $\eta >0,$ by Remark \ref{r1}(a),%
\begin{equation*}
\frac{1}{2}N\left( f;\eta \right) =\frac{1}{2}\sup_{\sigma <\tau \leq \sigma
+\eta }N\left( f;\left( \sigma ,\tau \right) \right) \leq \sup_{\sigma <\tau
\leq \sigma +\eta }\Delta \left( f;\left( \sigma ,\tau \right) \right) \leq
2N\left( f;\eta \right) ,
\end{equation*}%
and by (\ref{fa2}),%
\begin{equation*}
\frac{1}{2}\sup_{\eta >0}\frac{N\left( f;\eta \right) }{\eta ^{\mu }}\leq
\sup_{\eta >0}\sup_{\sigma \leq \tau ,\tau \leq \sigma +\eta }\frac{\Delta
\left( f;\left( \sigma ,\tau \right) \right) }{\eta ^{\mu }}\leq 2\sup_{\eta
>0}\frac{N\left( f;\eta \right) }{\eta ^{\mu }}.
\end{equation*}
\end{proof}

The following key estimate was pointed out in \cite{f}, Lemma 1.2.4, as an
extraction from Theorem 12.5 in \cite{b} (cf. inequality 12.76 in \cite{b}).
For the sake of completeness we provide its proof.

\begin{lemma}
\label{kel}(Lemma 1.2.4 in \cite{f}) For any $f\in D\left( 0.1],E\right) $
and every triplet $0\leq \sigma <t<\tau \leq 1$,%
\begin{equation}
N\left( f;\left( \sigma ,\tau \right) \right) \leq N\left( f;\left( \sigma
,t\right) \right) \vee N\left( f;\left( t,\tau \right) \right) +\Delta
\left( f;\left( \sigma ,t,\tau \right) \right) .  \label{f2}
\end{equation}
\end{lemma}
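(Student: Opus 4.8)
The plan is to exploit that $N(f;(\sigma,\tau))$ is defined as an \emph{infimum} over split points $\theta\in(\sigma,\tau]$, so that to bound it from above it suffices to exhibit a single admissible $\theta$ together with an estimate of the associated supremum. First I would fix $\varepsilon>0$ and choose near-optimal split points for the two subintervals: a point $\theta_{1}\in(\sigma,t]$ with
\[
\sup_{s\in[\sigma,\theta_{1}),\,u\in[\theta_{1},t]}\bigl[d(f(\sigma),f(s))\vee d(f(u),f(t))\bigr]\leq N(f;(\sigma,t))+\varepsilon,
\]
and a point $\theta_{2}\in(t,\tau]$ enjoying the analogous property relative to $(t,\tau)$. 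The decision of which of $\theta_{1},\theta_{2}$ to use as the split point for the whole interval $(\sigma,\tau)$ is dictated by which of the two distances realizes $\Delta(f;\sigma,t,\tau)=d(f(\sigma),f(t))\wedge d(f(t),f(\tau))$.

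Next I would run the two symmetric cases. Suppose first $\Delta(f;\sigma,t,\tau)=d(f(\sigma),f(t))$, so that $f(t)$ is within $\Delta$ of $f(\sigma)$ and the genuine oscillation sits in $(t,\tau)$; then I would take $\theta=\theta_{2}$ and estimate the defining supremum for $(\sigma,\tau)$ at this $\theta$. The terms $d(f(u),f(\tau))$ with $u\in[\theta_{2},\tau]$ are controlled directly by the choice of $\theta_{2}$. For the terms $d(f(\sigma),f(s))$ with $s\in[\sigma,\theta_{2})$ I would split the range into $[\sigma,\theta_{1})$, $[\theta_{1},t)$ and $[t,\theta_{2})$; on the first piece the bound is immediate from $\theta_{1}$, while on the latter two I route through $f(t)$ by the triangle inequality, writing $d(f(\sigma),f(s))\leq d(f(\sigma),f(t))+d(f(t),f(s))$ and bounding $d(f(t),f(s))$ by $N(f;(\sigma,t))+\varepsilon$ or $N(f;(t,\tau))+\varepsilon$ respectively. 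The remaining case $\Delta(f;\sigma,t,\tau)=d(f(t),f(\tau))$ is handled symmetrically with $\theta=\theta_{1}$, now routing the $u$-terms through $f(t)$.

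The one point requiring care---and the only real obstacle---is ensuring that the triangle-inequality detour through $f(t)$ costs exactly $\Delta(f;\sigma,t,\tau)$ and not more. This is precisely why the split point must be taken in the interval opposite to the ``small side'': when $d(f(\sigma),f(t))$ is the minimum, the extra term incurred in every estimate is $d(f(\sigma),f(t))=\Delta(f;\sigma,t,\tau)$, and likewise with the roles reversed in the other case. Collecting the estimates, each term of the supremum at the chosen $\theta$ is bounded by $N(f;(\sigma,t))\vee N(f;(t,\tau))+\Delta(f;\sigma,t,\tau)+\varepsilon$; since this $\theta$ is admissible for $(\sigma,\tau)$, the same bound holds for $N(f;(\sigma,\tau))$, and letting $\varepsilon\to0$ yields (\ref{f2}).
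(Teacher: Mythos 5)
Your proposal is correct and follows essentially the same route as the paper's proof: near-optimal split points $\theta_{1},\theta_{2}$ for the two subintervals, a case split according to which distance realizes $\Delta(f;\sigma,t,\tau)$, the choice of $\theta_{2}$ (resp.\ $\theta_{1}$) as the split point for $(\sigma,\tau)$, and the triangle-inequality detour through $f(t)$ on the middle pieces. No gaps; letting $\varepsilon\to 0$ finishes the argument exactly as in the paper.
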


\begin{proof}
Let $0\leq \sigma <t<\tau \leq 1$. By the definition of $N$, for each $%
\varepsilon \in \left( 0,1\right) $, there exist $\sigma <\theta _{1}\leq
t<\theta _{2}\leq \tau $ such that 
\begin{eqnarray*}
&&d\left( f\left( \sigma \right) ,f\left( s\right) \right) \vee d\left(
f\left( u\right) ,f\left( t\right) \right) \leq N\left( f;\left( \sigma
,t\right) \right) +\varepsilon ,s\in \left[ \sigma ,\theta _{1}\right) ,u\in %
\left[ \theta _{1},t\right] , \\
&&d\left( f\left( t\right) ,f\left( s\right) \right) \vee d\left( f\left(
u\right) ,f\left( \tau \right) \right) \leq N\left( f;\left( t,\tau \right)
\right) +\varepsilon ,s\in \left[ t,\theta _{2}\right) ,u\in \left[ \theta
_{2},\tau \right] .
\end{eqnarray*}%
We split the proof in two cases: $d\left( f\left( \sigma \right) ,f\left(
t\right) \right) \leq d\left( f\left( t\right) ,f\left( \tau \right) \right) 
$ and $d\left( f\left( \sigma \right) ,f\left( t\right) \right) >d\left(
f\left( t\right) ,f\left( \tau \right) \right) $ respectively.

\emph{Case 1}\textbf{: } $d\left( f\left( \sigma \right) ,f\left( t\right)
\right) \leq d\left( f\left( t\right) ,f\left( \tau \right) \right) $, i.e. $%
\Delta \left( f;\sigma ,t,\tau \right) =d\left( f\left( \sigma \right)
,f\left( t\right) \right) $. Obviously, 
\begin{equation}
N\left( f;\left( \sigma ,\tau \right) \right) \leq \sup_{s\in \lbrack \sigma
,\theta _{2})}d\left( f\left( \sigma \right) ,f\left( s\right) \right) \vee
\sup_{u\in \lbrack \theta _{2},\tau ]}d\left( f\left( u\right) ,f\left( \tau
\right) \right) ,  \label{fa3}
\end{equation}%
and 
\begin{equation}
\sup_{u\in \left[ \theta _{2},\tau \right] }d\left( f\left( u\right)
,f\left( \tau \right) \right) \leq N\left( f;\left( t,\tau \right) \right)
+\varepsilon .  \label{fa30}
\end{equation}

Now, 
\begin{equation*}
d\left( f\left( \sigma \right) ,f\left( s\right) \right) \leq N\left(
f;\left( \sigma ,t\right) \right) +\varepsilon ,\text{ if }s\in \left[
\sigma ,\theta _{1}\right) ,
\end{equation*}%
and 
\begin{eqnarray}
d\left( f\left( \sigma \right) ,f\left( s\right) \right) &\leq &d\left(
f\left( s\right) ,f\left( t\right) \right) +d\left( f\left( \sigma \right)
,f\left( t\right) \right)  \notag \\
&\leq &N\left( f;\left( \sigma ,t\right) \right) +\varepsilon +\Delta \left(
f;\sigma ,t,\tau \right) ,\text{ if }s\in \left[ \theta _{1},t\right] , 
\notag \\
d\left( f\left( \sigma \right) ,f\left( s\right) \right) &\leq &d\left(
f\left( s\right) ,f\left( t\right) \right) +d\left( f\left( \sigma \right)
,f\left( t\right) \right)  \notag \\
&\leq &N\left( f;\left( t,\tau \right) \right) +\varepsilon +\Delta \left(
f;\sigma ,t,\tau \right) ,\text{ if }s\in \left( t,\theta _{2}\right) . 
\notag
\end{eqnarray}%
Hence%
\begin{equation}
\sup_{s\in \lbrack \sigma ,\theta _{2})}d\left( f\left( \sigma \right)
,f\left( s\right) \right) \leq N\left( f;\left( \sigma ,t\right) \right)
\vee N\left( f;\left( t,\tau \right) \right) +\Delta \left( f;\sigma ,t,\tau
\right) +\varepsilon .  \label{fa4}
\end{equation}%
Using (\ref{fa3})-(\ref{fa4}), we have 
\begin{equation}
N\left( f;\left( \sigma ,\tau \right) \right) \leq N\left( f;\left( \sigma
,t\right) \right) \vee N\left( f;\left( t,\tau \right) \right) +\Delta
\left( f;\sigma ,t,\tau \right) +\varepsilon .  \label{fa5}
\end{equation}

\emph{Case 2}\textbf{: } $d\left( f\left( \sigma \right) ,f\left( t\right)
\right) >d\left( f\left( t\right) ,f\left( \tau \right) \right) $, i.e. $%
\Delta \left( f;\sigma ,t,\tau \right) =d\left( f\left( t\right) ,f\left(
\tau \right) \right) $. Obviously, 
\begin{equation}
N\left( f;\left( \sigma ,\tau \right) \right) \leq \sup_{s\in \lbrack \sigma
,\theta _{1})}d\left( f\left( \sigma \right) ,f\left( s\right) \right) \vee
\sup_{u\in \lbrack \theta _{1},\tau ]}d\left( f\left( u\right) ,f\left( \tau
\right) \right) ,  \label{fa6}
\end{equation}%
and%
\begin{equation}
\sup_{s\in \lbrack \sigma ,\theta _{1})}d\left( f\left( \sigma \right)
,f\left( s\right) \right) \leq N\left( f;\left( \sigma ,t\right) \right)
+\varepsilon .  \label{fa60}
\end{equation}

Now, 
\begin{eqnarray}
d\left( f\left( u\right) ,f\left( \tau \right) \right) &\leq &N\left(
f;\left( t,\tau \right) \right) +\varepsilon ,\text{ if }u\in \left[ \theta
_{2},\tau \right] ,  \notag \\
d\left( f\left( u\right) ,f\left( \tau \right) \right) &\leq &d\left(
f\left( u\right) ,f\left( t\right) \right) +d\left( f\left( t\right)
,f\left( \tau \right) \right)  \notag \\
&\leq &N\left( f;\left( \sigma ,t\right) \right) +\varepsilon +\Delta \left(
f;\sigma ,t,\tau \right) ,\text{ if }u\in \lbrack \theta _{1},t],  \notag \\
d\left( f\left( u\right) ,f\left( \tau \right) \right) &\leq &N\left(
f;\left( t,\tau \right) \right) +\varepsilon +\Delta \left( f;\sigma ,t,\tau
\right) ,\text{ if }u\in \lbrack t,\theta _{2})  \notag
\end{eqnarray}%
Therefore, again, 
\begin{equation*}
N\left( f;\left( \sigma ,\tau \right) \right) \leq N\left( f;\left( \sigma
,t\right) \right) \vee N\left( f;\left( t,\tau \right) \right) +\Delta
\left( f;\sigma ,t,\tau \right) +\varepsilon .
\end{equation*}

Since $\varepsilon \in \left( 0,1\right) $ is arbitrary, 
\begin{equation*}
N\left( f;\left( \sigma ,\tau \right) \right) \leq N\left( f;\left( \sigma
,t\right) \right) \vee N\left( f;\left( t,\tau \right) \right) +\Delta
\left( f;\sigma ,t,\tau \right) .
\end{equation*}
\end{proof}

For $\mu \in \left( 0,1\right) ,$ define for $f\in D\left( \left[ 0,1\right]
,E\right) ,$%
\begin{equation*}
\left[ f\right] _{\symbol{94}\mu }=\sup_{0<s<u\leq 1}\frac{\Delta \left( f;s,%
\frac{s+u}{2},u\right) }{\left\vert u-s\right\vert ^{\mu }}=\sup_{a>0}\frac{%
\sup_{\left\vert \sigma -\tau \right\vert \leq a}\Delta \left( f;\sigma ,%
\frac{\sigma +\tau }{2},\tau \right) }{a^{\mu }}.
\end{equation*}%
We will need the following equivalence claim.

\begin{lemma}
\label{l2}Let $\mu \in \left( 0,1\right) $. For any $f\in D^{\mu }\left( %
\left[ 0,1\right] ,E\right) ,$%
\begin{equation*}
\left[ f\right] _{\symbol{94}\mu }\leq \left[ f\right] _{\mu }\leq 2\left[ f%
\right] _{\symbol{126}\mu }\leq \frac{2}{1-2^{-\mu }}\left[ f\right] _{%
\symbol{94}\mu }.
\end{equation*}
\end{lemma}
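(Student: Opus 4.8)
My plan is to prove the chain of three inequalities separately. The leftmost inequality $\left[ f\right] _{\symbol{94}\mu }\leq \left[ f\right] _{\mu }$ is immediate: the supremum defining $\left[ f\right] _{\symbol{94}\mu }$ ranges only over triples of the special form $\left( s,\frac{s+u}{2},u\right) $, which form a subset of all admissible triples $s\leq t\leq u$ appearing in $\left[ f\right] _{\mu }$, so it can only be smaller. The middle inequality $\left[ f\right] _{\mu }\leq 2\left[ f\right] _{\symbol{126}\mu }$ is exactly the right-hand bound already furnished by Lemma \ref{l1}, so nothing new is required there.

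The substance of the lemma is the rightmost inequality $2\left[ f\right] _{\symbol{126}\mu }\leq \frac{2}{1-2^{-\mu }}\left[ f\right] _{\symbol{94}\mu }$, equivalently $\left[ f\right] _{\symbol{126}\mu }\leq \frac{1}{1-2^{-\mu }}\left[ f\right] _{\symbol{94}\mu }$. The idea is a dyadic bisection driven by the subadditivity estimate of Lemma \ref{kel}. Fix an interval $\left( \sigma ,\tau \right) $ with $0<\tau -\sigma \leq \eta $ and apply Lemma \ref{kel} at the midpoint $m=\left( \sigma +\tau \right) /2$:
\begin{equation*}
N\left( f;\left( \sigma ,\tau \right) \right) \leq N\left( f;\left( \sigma ,m\right) \right) \vee N\left( f;\left( m,\tau \right) \right) +\Delta \left( f;\sigma ,m,\tau \right) .
\end{equation*}
The two subintervals have length $\left( \tau -\sigma \right) /2\leq \eta /2$, so each of $N\left( f;\left( \sigma ,m\right) \right) $ and $N\left( f;\left( m,\tau \right) \right) $ is at most $N\left( f;\eta /2\right) $; moreover $\Delta \left( f;\sigma ,m,\tau \right) \leq \left[ f\right] _{\symbol{94}\mu }\left\vert \tau -\sigma \right\vert ^{\mu }\leq \left[ f\right] _{\symbol{94}\mu }\eta ^{\mu }$ directly from the definition of $\left[ f\right] _{\symbol{94}\mu }$, precisely because $m$ is the midpoint. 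Taking the supremum over all such $\left( \sigma ,\tau \right) $ yields the recursion $N\left( f;\eta \right) \leq N\left( f;\eta /2\right) +\left[ f\right] _{\symbol{94}\mu }\eta ^{\mu }$.

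Iterating the recursion $n$ times gives
\begin{equation*}
N\left( f;\eta \right) \leq N\left( f;\eta /2^{n}\right) +\left[ f\right] _{\symbol{94}\mu }\eta ^{\mu }\sum_{k=0}^{n-1}2^{-k\mu }.
\end{equation*}
Here I use that $f\in D^{\mu }$, so by Lemma \ref{l1} the quantity $\left[ f\right] _{\symbol{126}\mu }$ is finite, whence $N\left( f;\delta \right) \leq \left[ f\right] _{\symbol{126}\mu }\delta ^{\mu }\rightarrow 0$ as $\delta \rightarrow 0$; thus the residual term $N\left( f;\eta /2^{n}\right) $ vanishes as $n\rightarrow \infty $. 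Summing the geometric series $\sum_{k\geq 0}2^{-k\mu }=\left( 1-2^{-\mu }\right) ^{-1}$ leaves $N\left( f;\eta \right) \leq \frac{\left[ f\right] _{\symbol{94}\mu }}{1-2^{-\mu }}\eta ^{\mu }$; dividing by $\eta ^{\mu }$ and taking the supremum over $\eta >0$ gives $\left[ f\right] _{\symbol{126}\mu }\leq \frac{1}{1-2^{-\mu }}\left[ f\right] _{\symbol{94}\mu }$, and multiplying by $2$ closes the chain. The main obstacle, and the only genuinely delicate point, is organizing the bisection so that Lemma \ref{kel} feeds cleanly into a convergent geometric series and verifying that the residual $N\left( f;\eta /2^{n}\right) $ tends to zero; everything else is bookkeeping.
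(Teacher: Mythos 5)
Your proof is correct and follows essentially the same route as the paper: both hinge on applying Lemma \ref{kel} at the midpoint of $(\sigma,\tau)$ together with the finiteness of $\left[ f\right] _{\symbol{126}\mu }$ supplied by Lemma \ref{l1}. The only cosmetic difference is that you iterate the bisection and sum the geometric series $\sum_{k}2^{-k\mu }$, whereas the paper performs a single bisection to obtain $K\leq 2^{-\mu }K+\left[ f\right] _{\symbol{94}\mu }$ and absorbs the term $2^{-\mu }K$ into the left-hand side --- an equivalent manipulation.
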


\begin{proof}
Let $K=\left[ f\right] _{\symbol{126}\mu }$. According to Remark \ref{cl1},
for any $a>0,$ 
\begin{equation*}
N(f;a/2)\leq Ka^{\mu }2^{-\mu }.
\end{equation*}%
By (\ref{f2}), for every $\sigma <\tau \leq 1$ such that $\left\vert \tau
-\sigma \right\vert \leq a,$ we have, by Lemma \ref{kel},%
\begin{eqnarray*}
&&N\left( f;\left( \sigma ,\tau \right) \right) \\
&\leq &N\left( f;\left( \sigma ,\frac{\sigma +\tau }{2}\right) \right) \vee
N\left( f;\left( \frac{\sigma +\tau }{2},\tau \right) \right) +\Delta \left(
f;\sigma ,\frac{\sigma +\tau }{2},\tau \right) \\
&\leq &N\left( f;a/2\right) +\Delta \left( f;\sigma ,\frac{\sigma +\tau }{2}%
,\tau \right) \\
&\leq &Ka^{\mu }2^{-\mu }+\Delta \left( f;\sigma ,\frac{\sigma +\tau }{2}%
,\tau \right) .
\end{eqnarray*}%
Hence 
\begin{equation*}
N\left( f;a\right) \leq Ka^{\mu }2^{-\mu }+\sup_{0<\tau -\sigma \leq
a}\Delta \left( f;\left( \sigma ,\frac{\sigma +\tau }{2},\tau \right) \right)
\end{equation*}%
and%
\begin{equation*}
\frac{N\left( f;a\right) }{a^{\mu }}\leq 2^{-\mu }K+\frac{\sup_{0<\tau
-\sigma \leq a}\Delta \left( f;\left( \sigma ,\frac{\sigma +\tau }{2},\tau
\right) \right) }{a^{\mu }}
\end{equation*}%
Taking $\sup $ in $a>0$ on both sides, we see that 
\begin{equation*}
K\leq 2^{-\mu }K+\sup_{a>0}\frac{\sup_{0<\tau -\sigma \leq a}\Delta \left(
f;\sigma ,\frac{\sigma +\tau }{2},\tau \right) }{a^{\mu }}
\end{equation*}%
or%
\begin{eqnarray*}
\left[ f\right] _{\symbol{126}\mu } &=&K\leq \frac{1}{1-2^{-\mu }}\sup_{a>0}%
\frac{\sup_{0<\tau -\sigma \leq a}\Delta \left( f;\sigma ,\frac{\sigma +\tau 
}{2},\tau \right) }{a^{\mu }} \\
&=&\frac{1}{1-2^{-\mu }}\left[ f\right] _{\symbol{94}\mu }.
\end{eqnarray*}%
The statement follows by Lemma \ref{l1}.
\end{proof}

\section{Proof of Theorem \protect\ref{t1}}

First we show that there is $C=C\left( \mu ,p\right) $ so that with any $%
\delta \in \left( 0,1\right) ,$%
\begin{equation}
d\left( f\left( 0\right) ,f\left( t\right) \right) \leq Ct^{\mu }\left( 
\left[ f\right] _{\mu }\delta ^{\mu }+\delta ^{-1/p}||f]]_{\mu ,p}\right)
,t\leq 3/4.  \label{fo1}
\end{equation}

Assume $0<t\leq 3/4$. Taking $\varepsilon <\frac{1}{4}t$, we have for $\tau
^{\prime }\in \left( t-\varepsilon ,t\right) ,\tau ^{\prime \prime }\in
\left( t,t+\varepsilon \right) $, 
\begin{equation*}
d\left( f\left( 0\right) ,f\left( t\right) \right) \leq \Delta \left( f;\tau
^{\prime },t,\tau ^{\prime \prime }\right) +d\left( f\left( 0\right)
,f\left( \tau ^{\prime }\right) \right) +d\left( f\left( 0\right) ,f\left(
\tau ^{\prime \prime }\right) \right) .
\end{equation*}%
Integrating with respect to $\tau ^{\prime },\tau ^{\prime \prime }$ over $%
\bar{Q}=\left[ t-\varepsilon ,t\right] \times \lbrack t,t+\varepsilon ],$ 
\begin{eqnarray}
&&\varepsilon ^{2}d\left( f\left( 0\right) ,f\left( t\right) \right)
\label{fo20} \\
&\leq &\varepsilon \int_{t-\varepsilon }^{t}d\left( f\left( 0\right)
,f\left( \tau ^{\prime }\right) \right) d\tau ^{\prime }+\varepsilon
\int_{t}^{t+\varepsilon }d\left( f\left( 0\right) ,f\left( \tau ^{\prime
\prime }\right) \right) d\tau ^{\prime \prime }  \notag \\
&&+\int \int_{\bar{Q}}\Delta \left( f;\tau ^{\prime },t,\tau ^{\prime \prime
}\right) d\tau ^{\prime }d\tau ^{\prime \prime }=A_{1}+A_{2}+B.  \notag
\end{eqnarray}%
Now%
\begin{equation*}
B\leq \left[ f\right] _{\mu }\int_{t-\varepsilon
}^{t}\int_{t}^{t+\varepsilon }\left( \tau ^{\prime \prime }-\tau ^{\prime
}\right) ^{\mu }d\tau ^{\prime \prime }d\tau ^{\prime }\leq C\left[ f\right]
_{\mu }\varepsilon ^{2+\mu }.
\end{equation*}%
Taking $\varepsilon =\frac{1}{4}\delta t$ with any $\delta \in \left(
0,1\right) ,$ we have%
\begin{equation}
\varepsilon ^{-2}B\leq C\left[ f\right] _{\mu }\delta ^{\mu }t^{\mu }.
\label{fo3}
\end{equation}%
By H\"{o}lder inequality, $\kappa =\mu +1/p,1/p+1/q=1,$%
\begin{eqnarray*}
A_{1} &=&\varepsilon \int_{t-\varepsilon }^{t}\frac{d\left( f\left( 0\right)
,f\left( \tau ^{\prime }\right) \right) }{\tau ^{\prime }}\tau ^{\prime
}d\tau ^{\prime } \\
&\leq &\varepsilon \left( \int_{t-\varepsilon }^{t}\frac{d\left( f\left(
0\right) ,f\left( \tau ^{\prime }\right) \right) ^{p}}{\tau ^{\prime \kappa
p}}d\tau ^{\prime }\right) ^{1/p}\left( \int_{t-\varepsilon }^{t}\tau
^{\prime \kappa q}d\tau ^{\prime }\right) ^{1/q} \\
&=&C\varepsilon \left[ t^{1+\kappa q}-\left( t-\varepsilon \right)
^{1+\kappa q}\right] ^{1/q}||f]]_{\mu ,p}\leq Ct^{\kappa }\varepsilon
^{1+1/q}||f]]_{\mu ,p}.
\end{eqnarray*}%
Taking $\varepsilon =\frac{1}{4}\delta t$, with any $\delta \in \left(
0,1\right) ,$%
\begin{equation}
\varepsilon ^{-2}A_{1}\leq Ct^{\kappa }\varepsilon ^{1/q-1}A_{1}=Ct^{\kappa
}\varepsilon ^{-1/p}A_{1}=Ct^{\mu }\delta ^{-1/p}||f]]_{\mu ,p}  \label{fo4}
\end{equation}%
and, the same way,%
\begin{equation}
\varepsilon ^{-2}A_{2}\leq Ct^{\mu }\delta ^{-1/p}||f]]_{\mu ,p}.
\label{fo5}
\end{equation}%
The inequality (\ref{fo1}) follows from (\ref{fo20})-(\ref{fo5}).

Similarly, with obvious changes, we prove that%
\begin{equation}
d\left( f\left( 1\right) ,f\left( t\right) \right) \leq C\left( 1-t\right)
^{\mu }\left( \left[ f\right] _{\mu }\delta ^{\mu }+\delta ^{-1/p}[[f||_{\mu
,p}\right) ,t\geq 1/4,  \label{fo2}
\end{equation}%
for some $C=C\left( \mu ,p\right) $ with any $\delta \in \left( 0,1\right) .$

Finally, (\ref{fo1}), (\ref{fo2}) imply that there is $C=C\left( \mu
,p\right) $ so that with any $\delta \in \left( 0,1\right) ,$ 
\begin{eqnarray}
&&d\left( f\left( 1\right) ,f\left( 0\right) \right)  \label{f5} \\
&\leq &d\left( f\left( \frac{1}{2}\right) ,f\left( 0\right) \right) +d\left(
f\left( 1\right) ,f\left( \frac{1}{2}\right) \right)  \notag \\
&\leq &C\left( \frac{1}{2}\right) ^{\mu }\left( \left[ f\right] _{\mu
}\delta ^{\mu }+\delta ^{-1/p}[[f||_{\mu ,p}\right) +C\left( \frac{1}{2}%
\right) ^{\mu }\delta ^{-1/p}||f]]_{\mu ,p}.  \notag
\end{eqnarray}%
Now,%
\begin{eqnarray*}
d\left( f\left( 1\right) ,f\left( t\right) \right) &\leq &d\left( f\left(
1\right) ,f\left( 0\right) \right) +d\left( f\left( 0\right) ,f\left(
t\right) \right) \text{ if }t\in \left( 0,1/4\right) , \\
d\left( f\left( 0\right) ,f\left( t\right) \right) &\leq &d\left( f\left(
1\right) ,f\left( 0\right) \right) +d\left( f\left( 1\right) ,f\left(
t\right) \right) \text{ if }t\in \left( 3/4,1\right) .
\end{eqnarray*}%
Hence, by (\ref{fo1}), (\ref{fo2}) and (\ref{f5}), there is $C=C\left( \mu
,p\right) $ so that for all $\delta \in \left( 0,1\right) ,t\in \left(
0,1\right) ,$%
\begin{equation}
\lbrack f|_{\mu }+|f]_{\mu }\leq C\left( \left[ f\right] _{\mu }\delta ^{\mu
}+\delta ^{-1/p}[[f||_{\mu ,p}+\delta ^{-1/p}||f]]_{\mu ,p}\right) .
\label{f51}
\end{equation}

Now we estimate $\Delta \left( f;s,t,u\right) $ with $0\leq s<t<u$ $\leq 1$
and $t=\frac{s+u}{2}$.

(i) Assume $\left\vert s\right\vert >\frac{1}{4}\left\vert u-s\right\vert $
and $\left\vert 1-u\right\vert >\frac{1}{4}\left\vert u-s\right\vert .$

Let $\varepsilon <\frac{1}{4}\left( u-s\right) ,s^{\prime }\in \left(
s-\varepsilon ,s\right) ,s^{\prime \prime }\in \left( s,s+\varepsilon
\right) ,t^{\prime }\in \left( t-\varepsilon ,t\right) ,t^{\prime \prime
}\in \left( t,t+\varepsilon \right) ,u^{\prime }\in \left( u-\varepsilon
,u\right) ,u^{\prime \prime }\in \left( u,u+\varepsilon \right) ,$ and%
\begin{eqnarray*}
A &=&\Delta \left( f;s^{\prime },s,s^{\prime \prime }\right) +\Delta \left(
f;t^{\prime },t,t^{\prime \prime }\right) +\Delta \left( f;u^{\prime
},u,u^{\prime \prime }\right) , \\
B &=&\Delta \left( f;s^{\prime },t^{\prime },u^{\prime \prime }\right)
+\Delta \left( f;s^{\prime },t^{\prime },u^{\prime }\right) +\Delta \left(
f;s^{\prime },t^{\prime \prime },u^{\prime \prime }\right) +\Delta \left(
f;s^{\prime },t^{\prime \prime },u^{\prime }\right) \\
&&+\Delta \left( f;s^{\prime \prime },t^{\prime },u^{\prime \prime }\right)
+\Delta \left( f;s^{\prime \prime },t^{\prime },u^{\prime }\right) +\Delta
\left( f;s^{\prime \prime },t^{\prime \prime },u^{\prime \prime }\right)
+\Delta \left( f;s^{\prime \prime },t^{\prime \prime },u^{\prime }\right) .
\end{eqnarray*}%
Let $Q=\left( s-\varepsilon ,s\right) \times \left( s,s+\varepsilon \right)
\times \left( t-\varepsilon ,t\right) \times \left( t,t+\varepsilon \right)
\times \left( u-\varepsilon ,u\right) \times \left( u,u+\varepsilon \right) $%
. Then $\left\vert Q\right\vert =\varepsilon ^{6},$ and%
\begin{equation}
\Delta \left( f;s,t,u\right) \leq A+B.  \label{f3}
\end{equation}%
Let%
\begin{eqnarray*}
\tilde{A} &=&\varepsilon ^{-4}\int_{Q}A=\int_{s-\varepsilon
}^{s}\int_{s}^{s+\varepsilon }\Delta \left( f;s^{\prime },s,s^{\prime \prime
}\right) ds^{\prime \prime }ds^{\prime }+\int_{t-\varepsilon
}^{t}\int_{t}^{t+\varepsilon }\Delta \left( f;t^{\prime },t,t^{\prime \prime
}\right) dt^{\prime \prime }dt^{\prime } \\
&&+\int_{u-\varepsilon }^{u}\int_{u}^{u+\varepsilon }\Delta \left(
f;u^{\prime },u,u^{\prime \prime }\right) du^{\prime \prime }du^{\prime },
\end{eqnarray*}%
and%
\begin{eqnarray*}
&&\tilde{B} \\
&=&\varepsilon ^{-3}\int_{Q}B=\int_{s-\varepsilon }^{s}\int_{t-\varepsilon
}^{t}\int_{u}^{u+\varepsilon }\Delta \left( f;s^{\prime },t^{\prime
},u^{\prime \prime }\right) du^{\prime \prime }dt^{\prime }ds^{\prime } \\
&&+\int_{s-\varepsilon }^{s}\int_{t-\varepsilon }^{t}\int_{u-\varepsilon
}^{u}\Delta \left( f;s^{\prime },t^{\prime },u^{\prime }\right) du^{\prime
}dt^{\prime }ds^{\prime } \\
&&+\int_{s-\varepsilon }^{s}\int_{t}^{t+\varepsilon }\int_{u}^{u+\varepsilon
}\Delta \left( f;s^{\prime },t^{\prime \prime },u^{\prime \prime }\right)
du^{\prime \prime }dt^{\prime \prime }ds^{\prime }+\int_{s-\varepsilon
}^{s}\int_{t}^{t+\varepsilon }\int_{u-\varepsilon }^{u}\Delta \left(
f;s^{\prime },t^{\prime \prime },u^{\prime }\right) du^{\prime }dt^{\prime
\prime }ds^{\prime } \\
&&+\int_{s}^{s+\varepsilon }\int_{t-\varepsilon }^{t}\int_{u}^{u+\varepsilon
}\Delta \left( f;s^{\prime \prime },t^{\prime },u^{\prime \prime }\right)
du^{\prime \prime }dt^{\prime }ds^{\prime \prime }+\int_{s}^{s+\varepsilon
}\int_{t-\varepsilon }^{t}\int_{u-\varepsilon }^{u}\Delta \left( f;s^{\prime
\prime },t^{\prime },u^{\prime }\right) du^{\prime }dt^{\prime }ds^{\prime
\prime } \\
&&+\int_{s}^{s+\varepsilon }\int_{t}^{t+\varepsilon }\int_{u}^{u+\varepsilon
}\Delta \left( f;s^{\prime \prime },t^{\prime \prime },u^{\prime \prime
}\right) du^{\prime \prime }dt^{\prime }ds^{\prime \prime } \\
&&+\int_{s}^{s+\varepsilon }\int_{t}^{t+\varepsilon }\int_{u-\varepsilon
}^{u}\Delta \left( f;s^{\prime \prime },t^{\prime \prime },u^{\prime
}\right) du^{\prime }dt^{\prime \prime }ds^{\prime \prime } \\
&=&\tilde{B}_{1}+\ldots +\tilde{B}_{8}.
\end{eqnarray*}%
Integrating (\ref{f3}) over $Q,$%
\begin{equation}
\Delta \left( f;s,t,u\right) \leq \varepsilon ^{-2}\tilde{A}+\varepsilon
^{-3}\tilde{B}.  \label{f4}
\end{equation}%
Now,%
\begin{eqnarray*}
&&\int_{s-\varepsilon }^{s}\int_{s}^{s+\varepsilon }\Delta \left(
f;s^{\prime },s,s^{\prime \prime }\right) ds^{\prime }ds^{\prime \prime } \\
&\leq &\left[ f\right] _{\mu }\int_{s-\varepsilon
}^{s}\int_{s}^{s+\varepsilon }(s^{\prime \prime }-s^{\prime })^{\mu
}ds^{\prime }ds^{\prime \prime }\leq C\left[ f\right] _{\mu }\varepsilon
^{2+\mu }.
\end{eqnarray*}%
Similarly we estimate the other two terms in $\tilde{A}$ and see that%
\begin{equation}
\varepsilon ^{-2}\tilde{A}\leq C_{1}\left[ f\right] _{\mu }\varepsilon ^{\mu
}.  \label{f7}
\end{equation}

Using H\"{o}lder inequality, $1/q+1/p=1,p>1,$ with $\kappa =\mu +3/p,$ 
\begin{eqnarray*}
&&\varepsilon ^{-3}\tilde{B}_{1} \\
&=&\varepsilon ^{-3}\int_{s}^{s+\varepsilon }\int_{t-\varepsilon
}^{t}\int_{u}^{u+\varepsilon }\frac{d\left( f\left( s^{\prime }\right)
,f\left( t^{\prime }\right) \right) \wedge d\left( f\left( u^{\prime \prime
}\right) ,f\left( t^{\prime }\right) \right) }{\left\vert s^{\prime \prime
}-u^{\prime \prime }\right\vert ^{\kappa }}\left\vert s^{\prime \prime
}-u^{\prime \prime }\right\vert ^{\kappa }ds^{\prime }dt^{\prime }du^{\prime
\prime } \\
&\leq &\varepsilon ^{-3}\left( \int_{s-\varepsilon }^{s}\int_{t-\varepsilon
}^{t}\int_{u}^{u+\varepsilon }(u^{\prime \prime }-s^{\prime })^{\kappa
q}ds^{\prime }dt^{\prime }du^{\prime \prime }\right) ^{1/q} \\
&&\times \left( \int_{s-\varepsilon }^{s}\int_{t-\varepsilon
}^{t}\int_{u}^{u+\varepsilon }\frac{\Delta \left( f;s^{\prime },t^{\prime
},u^{\prime \prime }\right) ^{p}}{\left\vert u^{\prime \prime }-s^{\prime
}\right\vert ^{\kappa }}ds^{\prime }dt^{\prime }du^{\prime \prime }\right)
^{1/p} \\
&\leq &\varepsilon ^{-3}\left( \int_{s}^{s+\varepsilon }\int_{t-\varepsilon
}^{t}\int_{u}^{u+\varepsilon }\left\vert s^{\prime \prime }-u^{\prime \prime
}\right\vert ^{q\kappa }ds^{\prime \prime }dt^{\prime }du^{\prime \prime
}\right) ^{1/q}\left[ [f\right] ]_{\mu ,p}\mathbf{.}
\end{eqnarray*}%
Since 
\begin{eqnarray*}
&&\int_{s-\varepsilon }^{s}\int_{t-\varepsilon }^{t}\int_{u}^{u+\varepsilon
}(u^{\prime \prime }-s^{\prime })^{\kappa q}ds^{\prime }dt^{\prime
}du^{\prime \prime } \\
&\leq &C\varepsilon \left( u-s\right) ^{\kappa q+2}\frac{\varepsilon ^{2}}{%
\left( u-s\right) ^{2}}=C\varepsilon ^{3}\left( u-s\right) ^{\kappa q},
\end{eqnarray*}%
we have%
\begin{equation*}
\varepsilon ^{-3}\tilde{B}_{1}\leq C\varepsilon ^{-3+\frac{3}{q}}\left(
u-s\right) ^{\kappa }\left[ \left[ f\right] \right] _{\mu ,p}.
\end{equation*}%
Similarly estimating the other terms in $\tilde{B}$, we get%
\begin{eqnarray*}
\varepsilon ^{-3}\tilde{B} &\leq &C\varepsilon ^{-3+3/q}\left( u-s\right)
^{\kappa }\left[ [f\right] ]_{\mu ,p}=C\varepsilon ^{-3/p}\left( u-s\right)
^{\kappa }\left[ [f\right] ]_{\mu ,p}. \\
&=&C\delta ^{-3/p}\left( u-s\right) ^{\kappa -3/p}\left[ [f\right] ]_{\mu
,p}=C\delta ^{-3/p}\left( u-s\right) ^{\mu }\left[ [f\right] ]_{\mu ,p}%
\mathbf{.}
\end{eqnarray*}%
Hence by (\ref{f4}) and (\ref{f7}), for some $C=C\left( \mu ,p\right) ,$%
\begin{equation}
\Delta \left( f;s,t,u\right) \leq C_{1}\left[ f\right] _{\mu }\varepsilon
^{\mu }+C\varepsilon ^{-3/p}\left( u-s\right) ^{\kappa }\left[ [f\right]
]_{\mu ,p}  \label{f70}
\end{equation}%
if $\left\vert s\right\vert >\frac{1}{4}\left\vert u-s\right\vert $ and$%
\left\vert 1-u\right\vert >\frac{1}{4}\left\vert u-s\right\vert $. Taking $%
\varepsilon =\frac{1}{4}\delta \left( u-s\right) $\ with any $\delta \in
\left( 0,1\right) ,$ we have%
\begin{equation}
\Delta \left( f;s,t,u\right) \leq C\left( u-s\right) ^{\mu }\left( \left[ f%
\right] _{\mu }\delta ^{\mu }+\delta ^{-\frac{3}{p}}\left[ [f\right] ]_{\mu
,p}\right)  \label{f71}
\end{equation}%
for some $C=C\left( \mu ,p\right) $ if $\left\vert s\right\vert >\frac{1}{4}%
\left\vert u-s\right\vert $ and$\left\vert 1-u\right\vert >\frac{1}{4}%
\left\vert u-s\right\vert $.

(ii) Assume $\left\vert s\right\vert \leq \frac{1}{4}\left\vert
u-s\right\vert $ or $\left\vert 1-u\right\vert \leq \frac{1}{4}\left\vert
u-s\right\vert .$

If $s\leq \frac{1}{4}\left\vert u-s\right\vert $ (recall $t=\frac{s+u}{2}$),
then $s\leq 1/4$ and%
\begin{equation*}
t=s+\frac{u-s}{2}\leq \frac{3}{4}\left( u-s\right) \leq \frac{3}{4}.
\end{equation*}
By (\ref{fo1}), there is $C=c\left( \mu ,p\right) $ so that for any $\delta
\in \left( 0,1\right) \,,s\leq \frac{1}{4}\left\vert u-s\right\vert ,$ 
\begin{eqnarray*}
\Delta \left( f;s,t,u\right) &\leq &d\left( f\left( s\right) ,f\left(
0\right) \right) \wedge d\left( f\left( u\right) ,f\left( 0\right) \right)
+d\left( f\left( t\right) ,f\left( 0\right) \right) \\
&\leq &C\left\vert u-s\right\vert ^{\mu }\left( \left[ f\right] _{\mu
}\delta ^{\mu }+\delta ^{-1/p}||f]]_{\mu ,p}\right) .
\end{eqnarray*}%
If $1-u\leq \frac{1}{4}\left\vert u-s\right\vert $, then $u\geq \frac{3}{4}$
and $t\geq 1/4$ because%
\begin{equation*}
1-t=1-\frac{u+s}{2}=1-u+\frac{u-s}{2}\leq \frac{3}{4}\left( u-s\right) \leq 
\frac{3}{4}.
\end{equation*}%
By (\ref{fo2}), there is $C=C\left( \mu ,p\right) $ so that for any $\delta
\in \left( 0,1\right) \,,1-u\leq \frac{1}{4}\left\vert u-s\right\vert ,$ we
have%
\begin{eqnarray*}
\Delta \left( f;s,t,u\right) &\leq &d\left( f\left( s\right) ,f\left(
1\right) \right) \wedge d\left( f\left( u\right) ,f\left( 1\right) \right)
+d\left( f\left( t\right) ,f\left( 1\right) \right) \\
&\leq &C\left\vert u-s\right\vert ^{\mu }\left( \left[ f\right] _{\mu
}\delta ^{\mu }+\delta ^{-1/p}[[f||_{\mu ,p}\right) .
\end{eqnarray*}

Hence%
\begin{equation}
\Delta \left( f;s,t,u\right) \leq C\left( u-s\right) ^{\mu }\left( \left[ f%
\right] _{\mu }\delta ^{\mu }+\delta ^{-1/p}[[f||_{\mu ,p}+\delta
^{-1/p}||f]]_{\mu ,p}\right)  \label{f30}
\end{equation}%
if $\left\vert s\right\vert \leq \frac{1}{4}\left\vert u-s\right\vert $ or $%
\left\vert 1-u\right\vert \leq \frac{1}{4}\left\vert u-s\right\vert .$

According to (\ref{f71}) and (\ref{f30}), there is $C=C\left( \mu ,p\right) $
so that%
\begin{eqnarray*}
&&\Delta \left( f;s,t,u\right) \\
&\leq &C\left( u-s\right) ^{\mu }\left( \left[ f\right] _{\mu }\delta ^{\mu
}++\delta ^{-\frac{3}{p}}\left[ [f\right] ]_{\mu ,p}+\delta
^{-1/p}[[f||_{\mu ,p}+\delta ^{-1/p}||f]]_{\mu ,p}\right) ,
\end{eqnarray*}%
for any $0\leq s<t<u\leq 1,t=\left( s+u\right) /2.$ Hence for all $\delta
\in \left( 0,1\right) ,$%
\begin{equation}
\left[ f\right] _{\symbol{94}\mu }\leq C\left( \left[ f\right] _{\mu }\delta
^{\mu }+\left[ [f\right] ]_{\mu ,p}\delta ^{-3/p}\right)  \label{f52}
\end{equation}%
for some $C=C\left( \mu ,p\right) $. Then by Lemma \ref{l2} and (\ref{f51}),
(\ref{f52}), there is $C_{1}=C_{1}\left( \mu ,p\right) $ so that for all $%
\delta \in \left( 0,1\right) $ we have%
\begin{eqnarray*}
&&\left[ f\right] _{\mu }+|f]_{\mu }+[f|_{\mu } \\
&\leq &C_{1}\left( \left[ f\right] _{\mu }\delta ^{\mu }+\delta ^{-\frac{3}{p%
}}[\left[ f\right] ]_{\mu ,p}+\delta ^{-\frac{1}{p}}||f]]_{\mu ,p}+\delta ^{-%
\frac{1}{p}}[[f||_{\mu ,p}\right) .
\end{eqnarray*}

Choosing $\delta $ so that $\delta ^{\mu }C_{1}\leq 1/2$ we see that for
some $C=C\left( \mu ,p\right) ,$%
\begin{equation*}
\left[ f\right] _{\mu }+|f]_{\mu }+[f|_{\mu }\leq C\left( [\left[ f\right]
]_{\mu ,p}+||f]]_{\mu ,p}+[[f||_{\mu ,p}\right) ,f\in D^{\mu }\left( \left[
0,1\right] ,E\right) .
\end{equation*}

If $E=\mathbf{R}^{k},d\left( x,y\right) =\left\vert x-y\right\vert ,x,y\in 
\mathbf{R}^{k}$, then we can estimate the supremum of $f$. For each $t,$%
\begin{eqnarray*}
\left\vert f\left( t\right) \right\vert &\leq &\left\vert f\left( \tau
\right) -f\left( 0\right) \right\vert +\left\vert f\left( t\right) -f\left(
0\right) \right\vert +\left\vert f\left( \tau \right) \right\vert \\
&\leq &2|f]_{\mu }+\left\vert f\left( \tau \right) \right\vert ,\tau \in 
\left[ 0,1\right] .
\end{eqnarray*}%
Hence%
\begin{equation*}
\left\vert f\left( t\right) \right\vert \leq 2|f]_{\mu
}+\int_{0}^{1}\left\vert f\left( \tau \right) \right\vert d\tau ,
\end{equation*}%
and%
\begin{equation*}
\sup_{0\leq t\leq 1}\left\vert f\left( t\right) \right\vert \leq 2|f]_{\mu
}+\left( \int_{0}^{1}\left\vert f\left( \tau \right) \right\vert ^{p}d\tau
\right) ^{1/p}.
\end{equation*}

The claim of Theorem \ref{t1} follows.

\end{document}